\newtheorem{lem}{Lemma}[section]
\newtheorem{dfn}[lem]{Definition}
\newtheorem{thm}[lem]{Theorem}
\newtheorem{cor}[lem]{Corollary}
\newtheorem{pro}[lem]{Proposition}
\newtheorem{exa}[lem]{Example}
\def\B{\mathcal{GB}_2}
\def\T{\mathcal{GT}\!_2}
\def\l{\langle}
\def\r{\rangle}
\title{\normalsize\bf A GENERALIZATION OF 2-BAER GROUPS}
\author{\small{\textsc{Luise-Charlotte Kappe}}\\
\small{Department of Mathematical Sciences, Binghamton University}\\
\small{Binghamton, NY 13902-6000, USA}\\
\small{E-mail: menger@math.binghamton.edu}\\
[10pt]
\small{\textsc{Antonio Tortora}}\\
\small{Dipartimento di Matematica, Universit\`a di Salerno}\\
\small{Via Giovanni Paolo II, 132 - 84084 - Fisciano (SA), Italy}\\
\small{E-mail: antortora@unisa.it}}
\date{}
\begin{document}
\maketitle

\begin{abstract}
\noindent
A group in which all cyclic subgroups are $2$-subnormal is called a $2$-Baer group. The topic of this paper are generalized 2-Baer groups, i.e. groups in which the non-2-subnormal cyclic subgroups generate a proper subgroup of the group. If this subgroup is non-trivial, the group is called a generalized $T_2$-group. In particular, we provide structure results for such groups, investigate their nilpotency class and construct examples of finite $p$-groups which are generalized $T_2$-groups.\\

\noindent{\bf 2010 Mathematics Subject Classification:} 20E15, 20F19, 20F45\\
{\bf Keywords:} subnormal subgroup, $2$-Baer group, Engel element
\end{abstract}

\section{Introduction}

Let $n$ be a positive integer. A subgroup $H$ of a group $G$ is called $n$-subnormal, denoted by $H\lhd_n G$, if there exist distinct subgroups $H_0=H,H_1,\ldots, H_n=G$ such that
$$H=H_0\lhd H_1\lhd\ldots\lhd H_n=G.$$
In a group of nilpotency class $n$, all subgroups are $n$-subnormal. Conversely, by a well-known result of Roseblade \cite{Ros}, a group with all subgroups $n$-subnormal is nilpotent of class bounded by a function of $n$. For $n=1$, having all subgroups $n$-subnormal, hence normal, is equivalent to having all cyclic subgroups $n$-subnormal; but this is no longer the case if $n\geq 2$ \cite{Ma}.

A group $G$ is called an {\em $n$-Baer group} if all of its cyclic subgroups are $n$-subnormal. It can be easily seen that every $n$-Baer group is $(n+1)$-Engel, i.e. $[x,_{n+1} y]=1$ for all $x,y\in G$, where
$$[x,_1 y]=[x,y]=x^{-1}x^{y}\quad {\rm and} \quad [x,_k y]=[[x,_{k-1} y], y],\; k\geq 2.$$
We say $x\in G$ is a {\em right $n$-Engel element}, if $[x,_n g]=1$ for all $g\in G$, and a {\em left $n$-Engel element}, if $[g,_n x]=1$ for all $g\in G$.

Of course, the class of 1-Baer groups coincides with the familiar Dedekind groups. For the finite case these groups were classified by Dedekind in 1897 in \cite{De}, and for the case of infinite groups by Baer in \cite{Ba}. Our interest in a generalization of $n$-Baer groups, in particular $2$-Baer groups, is motivated by D.\,Cappitt's research in a generalization of Dedekind groups \cite{Ca}. He considers groups in which the non-normal subgroups of a group generate a proper subgroup of the group. Such groups were called {\em generalized Dedekind groups}. In \cite{Re} it was shown that the subgroup generated by all non-normal cyclic subgroups coincides with the subgroup generated by all non-normal subgroups. Generalized Dedekind groups which are not Dedekind groups are called {\em generalized Hamiltonian groups}.

Recalling that a Dedekind group is an $n$-Baer group for $n=1$, we want to extend Cappitt's concept of generalized Dedekind groups for $n>1$. For any group $G$, let
$$T_n(G)=\langle x\in G \,|\, \langle x\rangle \ntriangleleft_n G\rangle.$$
If all cyclic subgroups are $n$-subnormal in $G$, i.e. $G$ is an $n$-Baer group, we define $T_n(G)=1$. Notice that $T_n(G)$ is a characteristic subgroup of $G$ and $G/T_n(G)$ is an $n$-Baer group.

In light of \cite{Ca} and \cite{KR}, we generalize the concept of an $n$-Baer group as follows.

\begin{dfn}
If $G$ is a group with $T_n(G)\neq G$, then $G$ is called a generalized $n$-Baer group, and, if in addition $T_n(G)$ is non-trivial, then
$G$ is called a generalized $T_n$-group.
\end{dfn}

\noindent The class of generalized $n$-Baer groups and the class of generalized $T_n$-groups will be denoted by ${\mathcal{GB}_n}$ and ${\mathcal{GT}\!_n}$, respectively.

In \cite{Ca} it was shown that a group in ${\mathcal{GB}_1}$ is either abelian or torsion and nilpotent of class 2. The question arises, if there are any restrictions on the nilpotency class of generalized $n$-Baer groups. It was shown in \cite{He} and \cite{Mah} that 2-Baer groups are of nilpotency class at most 3. However, as it was shown in \cite{GK}, $n$-Baer groups with $n\geq 3$ are not necessarily nilpotent. Thus it appears to be natural to restrict our attention to generalized 2-Baer groups. As we will see, our results are, for the most part, analogue to those for generalized Dedekind groups, i.e. generalized 1-Baer groups. Here are some of the details of our investigations.

For a non-torsion group, which is a generalized 2-Baer group, we will show in Theorem \ref{nontorsion} that such a group is actually 2-Baer or, equivalently, 2-Engel. This is the direct analogue of Cappitt's result in \cite{Ca}. For torsion groups which are generalized 1-Baer groups, Cappitt \cite{Ca} shows that such groups are either 1-Baer groups or a direct product of a $p$-group in ${\mathcal{GB}_1}$ and another factor which is 1-Baer without $p$-torsion. Our result, presented in Theorem \ref{main}, is exactly the same, replacing 1-Baer by 2-Baer, and ${\mathcal{GB}_1}$ by $\B$.

However when it comes to restrictions on the nilpotency class of torsion groups in $\T$, our result differs from the one obtained by Cappitt in \cite{Ca}, where it was shown that torsion groups in ${\mathcal{GT}\!_1}$ are of nilpotency class 2. For metabelian $p$-groups in $\T$ with $p>2$ we can show that such groups are nilpotent of class exactly 3 (Theorem \ref{metp}). In Example \ref{2-group} we present a metabelian 2-group obtained by {\sf GAP} \cite{GAP}, which is in $\T$ but of nilpotency class 4, showing that Theorem \ref{metp} cannot be extended to 2-groups.  In Example \ref{ex}, for any prime $p$, we provide metabelian $p$-groups which are in $\T$ and have nilpotency class 3.

Using results by Endimioni in \cite{En89,En91,En93}, it can be shown that locally finite $p$-groups in $\T$ are soluble if $p\neq 5$, and for $p>5$ those groups are nilpotent of bounded class (see Theorem \ref{End}). The exact bound of the nilpotency class for groups in $\T$ remains an open question.

\section{Structure Results}

In this section we provide some structure results for generalized 2-Baer groups. As already pointed out in the introduction, the results for non-torsion and torsion groups alike are analogue to those determined by Cappitt \cite{Ca} for the case of generalized 1-Baer groups. We start with some preparatory lemmas.

\begin{lem}\label{normal} Let $G\in\B$.
\begin{itemize}
\item[$(i)$] If $x\in G\backslash T_2(G)$ and $H$ is a subgroup of $G$ containing $x$, then $H\in\B$.
\item[$(ii)$] If $N$ is a normal subgroup of $G$ contained in $T_2(G)$, then $G/N\in\B$.
\end{itemize}
\end{lem}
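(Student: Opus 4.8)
The plan is to handle both parts uniformly, by producing in the smaller group an element whose cyclic subgroup remains $2$-subnormal there and which lies outside the relevant $T_2$. The observation driving everything is that an element outside $T_2(G)$ must have a $2$-subnormal cyclic subgroup: since $T_2(G)=\l y\in G\mid\l y\r\ntriangleleft_2 G\r$, any $y$ with $\l y\r\ntriangleleft_2 G$ already belongs to $T_2(G)$, so $x\notin T_2(G)$ forces $\l x\r\lhd_2 G$. The two ingredients I will lean on are the standard hereditary properties of subnormality: a $2$-subnormal subgroup stays $2$-subnormal in any intermediate subgroup, and its image under a surjective homomorphism is again $2$-subnormal.

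For $(i)$, let $y\in H$ be any element with $\l y\r\lhd_2 G$, and fix $K$ with $\l y\r\lhd K\lhd G$. Since $\l y\r\le H$, intersecting with $H$ gives the chain $\l y\r\lhd K\cap H\lhd H$ (here $K\cap H\lhd H$ because $K\lhd G$), so $\l y\r\lhd_2 H$. The contrapositive says that any $y\in H$ with $\l y\r\ntriangleleft_2 H$ also has $\l y\r\ntriangleleft_2 G$, hence $y\in T_2(G)$; as such $y$ generate $T_2(H)$, we obtain $T_2(H)\le T_2(G)\cap H$. Now $x\in H$ while $x\notin T_2(G)$, so $x\notin T_2(H)$, and therefore $T_2(H)\neq H$, i.e. $H\in\B$.

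For $(ii)$, write $\bar g=gN$ for images in $G/N$. If $\l y\r\lhd_2 G$ with $\l y\r\lhd K\lhd G$, then applying the projection $G\to G/N$ to this chain yields $\l\bar y\r=\l y\r N/N\lhd KN/N\lhd G/N$ (images of normal subgroups under a surjection are normal), so $\l\bar y\r\lhd_2 G/N$. Again by contraposition, every generator $\bar y$ of $T_2(G/N)$ satisfies $\l y\r\ntriangleleft_2 G$, so $y\in T_2(G)$ and $\bar y\in T_2(G)/N$ (a subgroup of $G/N$ since $N\lhd G$ and $N\le T_2(G)$); thus $T_2(G/N)\le T_2(G)/N$. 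Because $N\le T_2(G)$ but $x\notin T_2(G)$, the coset $xN$ does not lie in $T_2(G)/N$, hence $xN\notin T_2(G/N)$, giving $T_2(G/N)\neq G/N$, i.e. $G/N\in\B$.

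The point needing the most care is that $T_2$ is by definition a \emph{generated} subgroup, so it is not enough to check that $x$ (or $xN$) fails to be one of the listed generators---a priori it could still lie in the subgroup they generate. The argument circumvents this by bounding the entire subgroup $T_2(H)$ inside $T_2(G)\cap H$, and $T_2(G/N)$ inside $T_2(G)/N$, which is precisely what the two hereditary properties of $2$-subnormality deliver through the chain-intersection and chain-image computations above. Everything else is routine membership bookkeeping once these containments are in hand.
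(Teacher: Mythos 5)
Your proof is correct and follows essentially the same route as the paper: its one-line proof rests on precisely the two containments $T_2(H)\leq T_2(G)$ and $T_2(G/N)\leq T_2(G)/N$, which you establish in detail by intersecting, respectively projecting, a subnormal chain $\langle y\rangle\lhd K\lhd G$. The only difference is that you supply the verification of these containments (and the final membership bookkeeping), which the paper simply asserts and leaves to the reader.
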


\begin{proof}
Since $T_2(H)\leq T_2(G)$ and $T_2(G/N)\leq T_2(G)/N$, we have $x\in H\backslash T_2(H)$ and $T_2(G/N)\neq G/N$, respectively.
\end{proof}

Given an element $x$ of a group $G$, throughout the paper, we denote by $\l x^G\r$ the normal closure of $x$ in $G$, i.e. the smallest normal subgroup of $G$ containing~$x$.

\begin{lem}\label{closure}
Let $G\in\B$ and $x\in G\backslash T_2(G)$. Then $\langle x^G\rangle$ is nilpotent of class at most $2$. In particular, $x$ is a left $3$-Engel element.
\end{lem}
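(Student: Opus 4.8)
The plan is to extract from the hypothesis the single structural fact that $\langle x\rangle$ is $2$-subnormal, and then to combine it with the elementary observation that the automorphism group of a cyclic group is abelian. Since $x\in G\setminus T_2(G)$, the element $x$ is not one of the generators of $T_2(G)$, so $\langle x\rangle\lhd_2 G$; that is, there is a subgroup $K$ with $\langle x\rangle\lhd K\lhd G$. Because $K\lhd G$ and $x\in K$, every conjugate $x^g$ lies in $K$, and hence the normal closure $A:=\langle x^G\rangle$ is contained in $K$. In particular $A'\le K'$. (If $\langle x\rangle$ is already normal in $G$ then $A=\langle x\rangle$ is abelian and there is nothing to prove, so I may assume the genuine defect-$2$ situation.)

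The key step is to let $K$ act by conjugation on the cyclic group $\langle x\rangle$, giving a homomorphism $K\to\operatorname{Aut}\langle x\rangle$ whose kernel is $C_K(x)$. Since the automorphism group of a cyclic group—finite or infinite—is abelian, this kernel contains $K'$; that is, $[x,K']=1$. As $A'\le K'$, I get $[x,A']=1$, so $x$ centralizes the derived subgroup of $A$. Now $A=\langle x^G\rangle$ is normal in $G$, hence $A'$ is characteristic in $A$ and therefore normal in $G$. Conjugating the relation $[x,A']=1$ by an arbitrary $g\in G$ and using $(A')^g=A'$ yields $[x^g,A']=1$ for every $g\in G$. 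Since the conjugates $x^g$ generate $A$, it follows that $[A,A']=1$, i.e. $A'\le Z(A)$, which is exactly the assertion that $A=\langle x^G\rangle$ is nilpotent of class at most $2$.

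For the ``in particular'' clause I would argue as follows. For any $g\in G$ one has $[g,x]=(x^g)^{-1}x$, and since both $x^g$ and $x$ lie in $A$, the commutator $[g,x]$ belongs to $A$. Consequently $[g,x,x]=[[g,x],x]$ is a commutator of two elements of $A$, so $[g,x,x]\in A'\le Z(A)$. As $x\in A$, this central element commutes with $x$, whence $[g,x,x,x]=1$ for all $g\in G$; that is, $x$ is a left $3$-Engel element.

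The only genuinely non-routine point—the main obstacle—is recognizing that one should descend to the normalizing subgroup $K$ and exploit the abelianity of $\operatorname{Aut}\langle x\rangle$ to force $K'$, and therefore $A'$, to centralize $x$. Once this is established, passing the relation $[x,A']=1$ across the whole conjugacy class by normality of $A'$ in $G$ is automatic, and the nilpotency and Engel conclusions follow with no further computation.
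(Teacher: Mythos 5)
Your proof is correct and takes essentially the same route as the paper's: both use the $2$-subnormal chain $\langle x\rangle\lhd K\lhd G$ together with the abelianness of $\operatorname{Aut}\langle x\rangle$ (the paper phrases this as $N_G(\langle x\rangle)/C_G(x)$ being abelian) to force $\langle x^G\rangle'\leq C_G(x)$, and then conjugate by arbitrary $g\in G$, using normality of the derived subgroup, to conclude $\langle x^G\rangle'\leq Z(\langle x^G\rangle)$. The Engel consequence is obtained identically in both arguments.
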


\begin{proof}
Clearly, $\langle x\rangle\lhd\langle x^G\rangle$ and $N_G(\l x\r)/C_G(x)$ is abelian. Denote by $H$ the commutator subgroup of $\l x^G\r$. Then $\l x^G\r C_G(x)/C_G(x)$ is abelian and therefore $H\leq C_G(x)$. Let $a\in H$ and $g\in G$. Now $a^{g^{-1}}\in H$, so that $[a^{g^{-1}},x]=1$ or $[a,x^g]=1$. This means that $H\leq C_G(x^g)$, for all $g\in G$. Hence, $H\leq Z(\langle x^G\rangle)$ and $\langle x^G\rangle$ is nilpotent of class $\leq 2$.
\end{proof}

\begin{pro}\label{gen}
Let $G\in\B$. Then, for any $d\geq 1$, every $d$-generator subgroup of $G$ is nilpotent of class at most $2(d+1)$.
\end{pro}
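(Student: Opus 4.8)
The plan is to reduce an arbitrary $d$-generator subgroup to one generated by elements lying outside $T_2(G)$, where Lemma~\ref{closure} applies, and then to assemble the class bound from normal closures via Fitting's theorem.

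First I would fix an element $t\in G\backslash T_2(G)$, which exists since $G\in\B$ forces $T_2(G)\neq G$. Given $U=\l u_1,\dots,u_d\r$, I would replace each generator $u_i$ by an element outside $T_2(G)$: if $u_i\notin T_2(G)$ keep $w_i=u_i$, and if $u_i\in T_2(G)$ set $w_i=t u_i$. Since $T_2(G)$ is a subgroup, $t\notin T_2(G)$ together with $u_i\in T_2(G)$ forces $t u_i\notin T_2(G)$, so in either case $w_i\in G\backslash T_2(G)$ and $u_i\in\l t,w_i\r$. Consequently $U\leq W:=\l t,w_1,\dots,w_d\r$, a subgroup generated by the $d+1$ elements $t,w_1,\dots,w_d$, all of which avoid $T_2(G)$. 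The point of introducing the single auxiliary element $t$ is to keep the generator count at $d+1$ rather than the $2d$ one would obtain by naively writing each element of $T_2(G)$ as a product of two elements outside it; this is what will yield the exact exponent $2(d+1)$.

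Next I would bound the nilpotency class of $W$. Writing $w_0=t$, each $w_j$ lies outside $T_2(G)$, so Lemma~\ref{closure} gives that $\l w_j^G\r$ is a normal subgroup of $G$ that is nilpotent of class at most $2$. Moreover $W\leq\l w_0^G\r\l w_1^G\r\cdots\l w_d^G\r$, a product of $d+1$ normal nilpotent subgroups each of class at most $2$. Iterating Fitting's theorem (the product of two normal nilpotent subgroups of classes $c_1$ and $c_2$ is again normal and nilpotent of class at most $c_1+c_2$) across the $d+1$ factors shows that this product is nilpotent of class at most $2(d+1)$. Hence $W$, and therefore its subgroup $U$, is nilpotent of class at most $2(d+1)$.

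The only genuine subtlety is ensuring that the successive partial products remain normal in $G$ so that Fitting's theorem can be reapplied at each stage; this holds because a product of normal subgroups is again normal and the partial products are nilpotent. I expect the main point to verify carefully is the reduction to $d+1$ generators outside $T_2(G)$ in every case, in particular when some of the $u_i$ already avoid $T_2(G)$, since this is precisely what pins the bound at $2(d+1)$ rather than a larger multiple of $d$.
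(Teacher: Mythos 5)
Your proof is correct and is essentially the paper's own argument: the paper likewise fixes $x\in G\backslash T_2(G)$, replaces each generator $h_i\in T_2(G)$ by $k_i=h_ix\notin T_2(G)$, and bounds the class of $\langle k_1^G\rangle\cdots\langle k_d^G\rangle\langle x^G\rangle$ by $2(d+1)$ using Lemma~\ref{closure} together with Fitting's theorem. The only differences are cosmetic: you multiply by the auxiliary element on the left instead of the right, and you make the iteration of Fitting's theorem explicit where the paper leaves it implicit.
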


\begin{proof}
Let $H=\langle h_1,\ldots,h_d\rangle$ be a subgroup of $G$ and $x\in G\backslash T_2(G)$. Consider $K=\langle h_1,\ldots,h_d,x\rangle$. Next we want to show that we can find elements $k_1,\ldots,k_d$ such that $K=\l k_1,\ldots,k_d,x\r$ and $k_i\in G\backslash T_2(G)$. For any $1\leq i\leq d$ we set $k_i=h_i$, if $h_i\in G\backslash T_2(G)$ and $k_i=h_i x$ otherwise. So each $k_i\in G\backslash T_2(G)$ and thus $K$ is contained in the subgroup $\l k_{1}^G\rangle\ldots\langle k_{d}^G\r\l x^G\r$, which is nilpotent of class $\leq 2(d+1)$ by Lemma \ref{closure}.
\end{proof}

\begin{lem}\label{infinite}
Let $G\in\B$ and $x$ an element of $G\backslash T_2(G)$ of infinite order. Then:
\begin{itemize}
\item[$(i)$] For any $g\in G$, there exists an integer $n=n(g)\geq 0$ such that $x^n g$ is an element of $G\backslash T_2(G)$ of infinite order;
\item[$(ii)$] $\langle x^G\rangle$ is abelian and $x$ is a right $2$-Engel element.
\end{itemize}
\end{lem}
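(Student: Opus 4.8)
The plan is to base both parts on a single observation: if $z\in G\setminus T_2(G)$ has infinite order, then $N_z:=\langle z^G\rangle$ is abelian. To see this I would argue as in the proof of Lemma \ref{closure}: since $\langle z\rangle$ is $2$-subnormal we have $\langle z\rangle\lhd N_z$, and $N_z$ has class at most $2$. Thus for every $g\in G$ the conjugate $z^g$ lies in $N_z$ and normalizes $\langle z\rangle$; as $\langle z\rangle\cong\mathbb Z$, conjugation by $z^g$ sends $z$ to $z$ or to $z^{-1}$. The second case gives $[z,z^g]=z^{-2}\in N_z'\le Z(N_z)$, so $z^2$ is central and is fixed by $z^g$, while inversion forces $(z^2)^{z^g}=z^{-2}$; hence $z^4=1$, contradicting infinite order. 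Therefore $[z,z^g]=1$ for all $g$, and conjugating this relation shows that all conjugates of $z$ commute, so $N_z$ is abelian. Applied to $z=x$, this already proves the first assertion of $(ii)$.

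For part $(i)$, fix $g$ and set $H=\langle x,g\rangle$, which is nilpotent by Proposition \ref{gen}; hence its torsion elements form a subgroup $T(H)$ and $H/T(H)$ is torsion-free. Since $x$ has infinite order, its image in $H/T(H)$ is nontrivial, and in a torsion-free group the image of $x^ng$ can be trivial for at most one $n$; thus $x^ng$ is torsion for at most one $n$. On the other hand $x\notin T_2(G)$, so the image $\bar x$ of $x$ in $G/T_2(G)$ is nontrivial, and the set of $n$ with $x^ng\in T_2(G)$ is a proper subset of the nonnegative integers (being empty, a single value, or one residue class modulo an order at least $2$). Hence infinitely many $n\ge 0$ give $x^ng$ of infinite order and outside $T_2(G)$, which proves $(i)$.

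To finish $(ii)$, write $N=\langle x^G\rangle$, abelian by the first paragraph. Given $g$, use $(i)$ to choose $y=x^ng$ of infinite order in $G\setminus T_2(G)$; by the first paragraph applied to $y$, the group $\langle y^G\rangle$ is abelian, so in particular $[y,y^x]=1$. A short commutator computation gives $[y,x]=[g,x]$, whence $y^x=y\,c$ with $c=[g,x]=(x^g)^{-1}x\in N$, and therefore $1=[y,y^x]=[y,c]$, i.e. $y$ centralizes $c$. Since $x^n\in N$ already centralizes $c$, it follows that $g$ centralizes $c=[g,x]$, that is $[x,g,g]=1$; as $g$ is arbitrary, $x$ is a right $2$-Engel element.

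The step I expect to be the main obstacle is the infinite-order half of $(i)$, where one must resist assuming $x^g\in\langle x\rangle$ (which need not hold): the nilpotency of $\langle x,g\rangle$ supplied by Proposition \ref{gen}, and the resulting torsion subgroup $T(H)$, are exactly what make the argument go through. A second delicate point is that abelianness of $\langle x^G\rangle$ by itself does not force the identity $[x,g,g]=1$ (a sufficiently long unipotent action of $g$ on $N$ would violate it); it is genuinely the auxiliary element $y$ from $(i)$ that transfers the commuting relation down to $g$.
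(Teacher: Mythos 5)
Your proof is correct, and it splits into one half that mirrors the paper and one half that is genuinely different. For part $(ii)$ your argument is essentially the paper's: conjugation by elements of $\langle x^G\rangle$ induces an automorphism of the infinite cyclic group $\langle x\rangle$, inversion is ruled out because $[z,z^g]=z^{-2}$ would lie in the centre of a class $\leq 2$ group (Lemma \ref{closure}) and force $z^4=1$, and the right $2$-Engel property is then deduced by applying abelianness to the auxiliary element $x^n g$ supplied by $(i)$ and carrying out the same commutator manipulation (the paper writes it as $1=[x,x^kg,x^kg]=[x,g,x^kg]=[x,g,g]$, which is your $[y,c]=1$ computation in disguise). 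For part $(i)$, however, your route differs: the paper argues by a case analysis on whether $g\in T_2(G)$, uses Lemma \ref{closure} (products of normal closures) to make $\langle x,g\rangle$ nilpotent, and produces an explicit small exponent $n\leq 3$; you instead invoke Proposition \ref{gen} for the nilpotency of $H=\langle x,g\rangle$ --- legitimate and non-circular, since that proposition precedes the lemma and depends only on Lemma \ref{closure} --- and then combine two ``exclusion'' counts: the image of $x^ng$ in the torsion-free quotient $H/T(H)$ vanishes for at most one $n$, while the set of $n$ with $x^ng\in T_2(G)$ is at most one residue class modulo the order (at least $2$) of the image of $x$ in $G/T_2(G)$. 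This buys you a stronger conclusion (infinitely many valid $n$ rather than one) and avoids the paper's somewhat delicate case distinctions --- indeed the paper's second case contains a typo, ``we may assume $xg\in G\backslash T_2(G)$'' where $xg\in T_2(G)$ is meant --- at the modest cost of quoting the standard fact that the torsion elements of a nilpotent group form a subgroup with torsion-free quotient.
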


\begin{proof}
$(i)$ Given an arbitrary element $g$ of $G$, we consider first the case that $g\in T_2(G)$. Thus $xg\in G\backslash T_2(G)$. Moreover, by Lemma \ref{closure}, $\l x,g\r\leq\l x^G\r\l (xg)^G\r$ and hence $\l x,g\r$ is nilpotent. If $xg$ has finite order, this implies that $x^2 g$ and $x^3 g$ have infinite order. However, if $x^2 g\in T_2(G)$, then $x^3 g\in G\backslash T_2(G)$. Next let $g\in G\backslash T_2(G)$. Obviously, we may assume that $g$ has finite order. If
$xg\in G\backslash T_2(G)$, our claim follows as before. Thus we may assume $xg\in G\backslash T_2(G)$. By Lemma \ref{closure} it follows that $\l x,g\r$ is nilpotent and so $xg$ has infinite order.\\

$(ii)$ We prove that $x\in Z(\l x^G\r)$. Suppose to the contrary and let $a\in\l x^{G}\r$ such that $x^a\neq x$. Since $\l x\r\lhd \l x^{G}\r$, we have $x^a=x^m$ for some integer $m\neq 0,1$. Similarly, $x^{a^{-1}}=x^{m'}$ with $m'\neq 0,1$. Then $x^{mm'}=x$, or $x^{mm'-1}=1$. This gives $m=m'=-1$ and so $x^a=x^{-1}$. We conclude $[a,x]=x^2$. Lemma \ref{closure} implies that $x^2\in Z(\langle x^G\rangle)$ and therefore $1=[a,x^2]=[a,x]^2=x^4$, a contradiction. Hence, $x\in Z(\l x^G\r)$ and $\langle x^G\rangle$ is abelian. In particular, $[g,x,x]=1$ for all $g\in G$. On the other hand, by $(i)$, for any $g\in G$ there exists $k\geq 0$ such that $x^k g$ is an element of $G\backslash T_2(G)$ of infinite order. Thus $1=[x,x^k g,x^k g]=[x,g,x^k g]=[x,g,g]$, which shows that $x$ is a right 2-Engel element.
\end{proof}

In \cite{He}, Heineken already showed that in the class of non-torsion groups, 2-Baer and 2-Engel are equivalent properties. We extend this result here for generalized 2-Baer groups.

\begin{thm}\label{nontorsion} Let $G$ be a non-torsion group. Then the following are equivalent:
\begin{itemize}
\item[$(i)$] $G$ is a $2$-Baer group;
\item[$(ii)$] $G$ is a generalized $2$-Baer group;
\item[$(iii)$] $G$ is a $2$-Engel group.
\end{itemize}
\end{thm}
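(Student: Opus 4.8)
The plan is to establish the cycle of implications $(i)\Rightarrow(ii)\Rightarrow(iii)\Rightarrow(i)$, where only the step $(ii)\Rightarrow(iii)$ actually uses the non-torsion hypothesis. The implication $(i)\Rightarrow(ii)$ is immediate, since a $2$-Baer group satisfies $T_2(G)=1\neq G$ by definition and hence lies in $\B$. For $(iii)\Rightarrow(i)$ I would invoke the classical theory of $2$-Engel groups: in a $2$-Engel group every normal closure $\l x^G\r$ is abelian (Levi), whence $\l x\r\lhd\l x^G\r\lhd G$ shows that each cyclic subgroup is $2$-subnormal, so $G$ is $2$-Baer. Thus the whole content lies in $(ii)\Rightarrow(iii)$.

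For that implication the first task is to produce an element of infinite order lying \emph{outside} $T_2(G)$, so that Lemma \ref{infinite} can be applied at all. I would argue by contradiction: if every element of $G\backslash T_2(G)$ were torsion, pick $t$ of infinite order (possible since $G$ is non-torsion, and then necessarily $t\in T_2(G)$) together with some $x\in G\backslash T_2(G)$. Then $xt\in G\backslash T_2(G)$ as well, and by Lemma \ref{closure} both $\l x^G\r$ and $\l (xt)^G\r$ are normal and nilpotent of class at most $2$; by Fitting's theorem their product is nilpotent and contains $\l x,t\r$. Since in a nilpotent group the torsion elements form a subgroup, and this subgroup would contain the (assumed torsion) elements $x$ and $xt$, it would also contain $t=x^{-1}(xt)$, contradicting that $t$ has infinite order.

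Now fix $x\in G\backslash T_2(G)$ of infinite order. By Lemma \ref{infinite}$(ii)$, $x$ is a right $2$-Engel element. Here I would use the theorem of W.\,Kappe that the right $2$-Engel elements of any group form a characteristic subgroup $R_2(G)$. Given an arbitrary $g\in G$, Lemma \ref{infinite}$(i)$ furnishes an integer $n\geq 0$ with $x^n g\in G\backslash T_2(G)$ of infinite order, so $x^n g\in R_2(G)$ by Lemma \ref{infinite}$(ii)$ once more; since $x\in R_2(G)$ and $R_2(G)$ is a subgroup, it follows that $g=x^{-n}(x^n g)\in R_2(G)$. Hence $R_2(G)=G$, i.e.\ $[h,g,g]=1$ for all $g,h\in G$, which is exactly the $2$-Engel condition $(iii)$.

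The main obstacle is the passage in the last paragraph from the ``generic'' elements covered by Lemma \ref{infinite} to \emph{all} elements of $G$; this is precisely what the subgroup property of right $2$-Engel elements delivers, and it is the step I would be most careful to justify, either by citing Kappe's result or, for self-containedness, by checking directly that $R_2(G)$ is closed under products and inverses. The existence step is secondary but genuinely necessary, since Lemma \ref{infinite} is vacuous in the absence of an infinite-order element outside $T_2(G)$.
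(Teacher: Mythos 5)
Your proposal is correct, and while it rests on the same pillars as the paper's own proof --- Levi's theorem for $(iii)\Rightarrow(i)$, Lemma \ref{infinite}, and W.~Kappe's theorem that $R_2(G)$ is a subgroup --- it diverges in two places, both to your credit. First, the paper's proof simply says ``take $x\in G\backslash T_2(G)$ of infinite order'' without justifying that such an element exists; your argument by contradiction (Lemma \ref{closure} plus Fitting's theorem plus the fact that torsion elements of a nilpotent group form a subgroup, which mirrors the technique the paper itself uses inside the proof of Lemma \ref{infinite}$(i)$) closes this small but genuine gap. Second, your conclusion of $(ii)\Rightarrow(iii)$ is more economical: the paper fixes $a,b\in G$, produces $x^m a, x^n b\in R_2(G)$, uses the subgroup property of $R_2(G)$ only to get $x^m\in R_2(G)$, and then needs the fact that $\l a,b,x\r=\l x^m a,x^n b,x\r$ is nilpotent of class at most $3$ in order to expand $1=[x^m a,b,b]=[x^m,b,b][x^m,b,a,b][a,b,b]=[a,b,b]$; you instead exploit the subgroup property in full, writing $g=x^{-n}(x^n g)\in R_2(G)$ for an arbitrary $g\in G$, so that $R_2(G)=G$ and the $2$-Engel condition follows with no commutator calculus and no appeal to a nilpotency bound on a three-generator subgroup. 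What the paper's route displays is how the Engel identity propagates through a nilpotent subgroup; what yours buys is brevity, one fewer auxiliary fact, and an explicit treatment of the existence issue that the paper leaves implicit.
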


\begin{proof}
Denote by
$$R_2(G)=\{x\in G\,|\,[x,g,g]=1\;{\rm for \; all}\; g\in G\}$$
the set of all right 2-Engel elements of $G$.
Since the normal closure of any $2$-Engel element is abelian \cite{Le} (see also \cite[Theorem 7.13]{Ro2}), and hence every cyclic subgroup is 2-subnormal, it is enough to show that $(ii)$ implies $(iii)$.

Let $a,b\in G$ and take $x\in G\backslash T_2(G)$ of infinite order. Lemma \ref{infinite} implies that $x\in R_2(G)$ and there exist $m,n\geq 0$ such that $x^m a,x^n b\in R_2(G)$. But $R_2(G)$ is a subgroup of $G$ \cite{Ka} (see also \cite[Corollary 1, p. 44]{Ro2}), so that $x^m$ is also an element of $R_2(G)$. On the other hand, by Lemma \ref{infinite}, $\l a,b,x\r=\l x^ma,x^nb,x\r$ is nilpotent of class $\leq 3$. Thus we get
$$1=[x^m a, b,b]=[x^m, b,b][x^m, b,a,b][a,b,b]=[a,b,b].$$
This proves that $G$ is 2-Engel.
\end{proof}

Turning now to the class of torsion groups which are generalized 2-Baer groups, our characterization is parallel to the one of generalized 1-Baer groups \cite{Ca}.

\begin{thm}\label{main} Let $G$ be a torsion group and assume that $G$ is a generalized $2$-Baer group. Then $G=H\times K$ where $H$ is a locally finite $p$-group in $\B$ for some prime $p$ and $K$ is a $2$-Baer group without elements of order $p$.
\end{thm}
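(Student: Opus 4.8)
The plan is to reduce the global structure to the primary components of $G$ and then determine how many of them can fail to be $2$-Baer. First I would note that, by Proposition \ref{gen}, every finitely generated subgroup of $G$ is nilpotent, so $G$ is locally nilpotent; being also torsion, $G$ is the direct product $G=\prod_p G_p$ of its primary components, where $G_p$ (the set of $p$-elements of $G$) is a locally finite $p$-group. Next I would record the elementary but essential fact that $2$-subnormality of a cyclic subgroup is detected componentwise: for $w=\prod_p w_p$ with $w_p\in G_p$ one has $\l w\r=\prod_p\l w_p\r$, each $w_p$ being a power of $w$, and $\l w\r\lhd_2 G$ if and only if $\l w_p\r\lhd_2 G_p$ for all $p$. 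The forward implication follows by applying the projections $\pi_p\colon G\to G_p$, under which $2$-subnormal subgroups have $2$-subnormal images, and the reverse by multiplying the defect-at-most-$2$ series across the factors. Two consequences I will use are that $T_2(G_p)\le T_2(G)$ for every $p$, and that any direct product of $2$-Baer primary components is itself $2$-Baer.

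The crux is to show that \emph{at most one} primary component can fail to be $2$-Baer. Call a prime $q$ \emph{bad} if $G_q$ is not $2$-Baer, and fix a witness $b\in G_q$ with $\l b\r\ntriangleleft_2 G_q$. For an arbitrary $z\in G$, writing $z=z_q\,z_{q'}$ with $q$-complement $z_{q'}=\prod_{r\neq q}z_r$, I would form $y=b\,z_{q'}$; since the $q$-component of $y$ is $b$, the componentwise criterion gives $\l y\r\ntriangleleft_2 G$, so $y\in T_2(G)$, and as $b\in T_2(G)$ we conclude $z_{q'}\in T_2(G)$. Now suppose two distinct primes $p\neq q$ were both bad and choose $x\in G\setminus T_2(G)$, which exists because $G\in\B$. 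This \emph{complement trick} at $p$ yields $x_{p'}\in T_2(G)$, hence $x_p=x\,x_{p'}^{-1}\notin T_2(G)$; but the same trick at $q$, applied to $x_p$ (whose $q$-component is trivial, so that its $q$-complement is $x_p$ itself), yields $x_p\in T_2(G)$, a contradiction.

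Finally I would assemble the decomposition. If no component is bad, then $G$ is $2$-Baer by the second paragraph, $T_2(G)=1$, and I take $p$ to be any prime with $G_p\neq1$; otherwise I let $p$ be the unique bad prime and invoke the complement trick at $p$ to produce, exactly as above, an element $x_p\in G_p\setminus T_2(G)$. In either case I set $H=G_p$ and $K=\prod_{r\neq p}G_r$, so that $G=H\times K$. Then $K$ is a direct product of $2$-Baer components, hence a $2$-Baer group, and it has no elements of order $p$; while $H=G_p$ contains an element outside $T_2(G)$ (either $x_p$, or any nontrivial element in the $2$-Baer case), so $H\in\B$ by Lemma \ref{normal}$(i)$.

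The main obstacle is the middle step: once the componentwise subnormality criterion is in hand, the complement trick does all the real work of forcing the badness to concentrate at a single prime. Everything surrounding it is either standard structure theory of torsion locally nilpotent groups or a direct appeal to the preparatory lemmas, so the delicate point is verifying carefully that projections and products of subnormal series behave as claimed, and that the complement construction indeed lands inside $T_2(G)$.
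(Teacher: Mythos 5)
Your proposal is correct and takes essentially the same approach as the paper: both pass to the primary decomposition of the locally nilpotent torsion group via Proposition \ref{gen}, then use the coprime direct-factor structure to show that a non-$2$-subnormal cyclic subgroup in one primary component forces the complementary factor into $T_2(G)$, so that $T_2(G)\neq G$ concentrates the failure of the $2$-Baer property at a single prime. The only cosmetic difference is the transfer mechanism: you push $2$-subnormality through the projections $\pi_p$, while the paper intersects $\l hk\r$ with the Sylow factor to recover $\l h\r$; these are interchangeable one-line facts, and the rest of the bookkeeping matches the paper's argument.
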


\begin{proof}
By Proposition \ref{gen}, the group $G$ is locally nilpotent, hence $G$ is the direct product of its Sylow $p$-subgroups.
Let $H_p$ be a Sylow $p$-subgroup of $G$ and suppose $T_2(H_p)=H_p$. Thus $G=H_p\times K_{p'}$, where $K_{p'}$ is the complement of $H_p$ in $G$. Let $h\in H_p$ and $k\in K_{p'}$, with $\l h\r\ntriangleleft_2 H_p$. Clearly, if $\langle hk\rangle\lhd_2 G$, then $\langle h\rangle=\langle hk\rangle\cap H_p\lhd_2 H_p$. Therefore, we have necessarily $hk\in T_2(G)$. It follows that $H_p K_{p'}=T_2(H_p)K_{p'}\leq T_2(G)$ and $G=T_2(G)$, a contradiction. This means that $H_{p}\in \B$ for all primes $p$. On the other hand, if $T_2(K_{p'})\neq 1$, we get as before $H_p\leq H_p T_2(K_{p'})\leq T_2(G)$ and $G=T_2(G)$. Hence there exists a prime $p$ such that $G=H\times K$ where $H$ is a $p$-group in $\B$ and $K$ is a 2-Baer group without elements of order~$p$.
\end{proof}

\begin{cor}
A group $G$ is a generalized $T_2$-group if and only if $G=H\times K$, where $H$ is a locally finite $p$-group in $\T$ for some prime $p$ and $K$ is a $2$-Baer torsion group without elements of order $p$.
\end{cor}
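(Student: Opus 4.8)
The plan is to reduce the statement to Theorems~\ref{nontorsion} and~\ref{main} by means of one auxiliary observation describing how $T_2$ behaves in a direct product $G=H\times K$ in which $H$ is a $p$-group, $K$ has no element of order $p$, and $K$ is a $2$-Baer group. First I would establish this observation. If $h\in H$ and $k\in K$, then $|h|$ and $|k|$ are coprime, so every cyclic subgroup of $G$ splits as $\langle hk\rangle=\langle h\rangle\times\langle k\rangle$ with $\langle h\rangle=\langle hk\rangle\cap H$ and $\langle k\rangle=\langle hk\rangle\cap K$. Since intersecting a $2$-subnormal subgroup with a subgroup again yields a $2$-subnormal subgroup, and since the product of a $2$-subnormal subgroup of $H$ with one of $K$ is $2$-subnormal in $H\times K$, one obtains that $\langle hk\rangle\lhd_2 G$ if and only if both $\langle h\rangle\lhd_2 H$ and $\langle k\rangle\lhd_2 K$. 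As $K$ is $2$-Baer, the second condition always holds, whence $\langle hk\rangle\ntriangleleft_2 G$ exactly when $\langle h\rangle\ntriangleleft_2 H$. From this I would read off $T_2(G)\le T_2(H)\times K$, that $T_2(G)=1$ precisely when $T_2(H)=1$, and, when $T_2(H)\neq 1$, the stronger identity $T_2(G)=T_2(H)\times K$.

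For the forward implication, let $G\in\T$, so that $T_2(G)\neq 1$ and $T_2(G)\neq G$. If $G$ were non-torsion, Theorem~\ref{nontorsion} would make $G$ a $2$-Baer group, forcing $T_2(G)=1$, a contradiction; hence $G$ is torsion. Theorem~\ref{main} then yields $G=H\times K$ with $H$ a locally finite $p$-group in $\B$ and $K$ a $2$-Baer group without elements of order $p$, and $K$ is torsion because $G$ is. By the auxiliary observation, $T_2(G)\neq 1$ forces $T_2(H)\neq 1$; together with $T_2(H)\neq H$ (which holds since $H\in\B$), this gives $H\in\T$.

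For the reverse implication, suppose $G=H\times K$ is of the stated form, with $H\in\T$ (so $1\neq T_2(H)\neq H$) and $K$ a $2$-Baer torsion group without elements of order $p$. Then $G$ is torsion, a non-$2$-subnormal cyclic subgroup of $H$ stays non-$2$-subnormal in $G$ (so $T_2(G)\neq 1$), and the inclusion $T_2(G)\le T_2(H)\times K<H\times K=G$ shows $T_2(G)\neq G$. Hence $G\in\T$.

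The main obstacle is the auxiliary observation, specifically the equivalence stating that $\langle hk\rangle$ is $2$-subnormal in $G$ if and only if $\langle h\rangle$ is $2$-subnormal in $H$ and $\langle k\rangle$ is $2$-subnormal in $K$: this is where the coprime decomposition of cyclic subgroups and the behaviour of $2$-subnormality under intersections and under direct products must be combined with care. Once this is in place, both implications follow quickly from Theorems~\ref{nontorsion} and~\ref{main}.
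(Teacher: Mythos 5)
Your proof is correct, and at the top level it follows the same strategy as the paper: both directions are reduced to Theorems~\ref{nontorsion} and~\ref{main} through an analysis of how $T_2$ behaves in a coprime direct decomposition. The difference is in how that analysis is carried out, and it is worth comparing. The paper works with normal closures: writing $x=hk$, it uses the criterion $\langle x\rangle\lhd_2 G\Leftrightarrow\langle x\rangle\lhd\langle x^G\rangle$ together with the conjugation computation $x^{(x^{h'k'})}=h^{(h^{h'})}k^{(k^{k'})}$ to assert $T_2(G)\leq T_2(H)\times K$ and, ``similarly'', $T_2(G)\leq H\times T_2(K)$, for \emph{any} $p$-group $H$ and torsion $K$ without elements of order $p$. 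You instead split each cyclic subgroup by coprimality, $\langle hk\rangle=\langle h\rangle\times\langle k\rangle$, and use that $2$-subnormality passes to intersections and is preserved by direct products, obtaining the equivalence that $\langle hk\rangle\lhd_2 G$ iff $\langle h\rangle\lhd_2 H$ and $\langle k\rangle\lhd_2 K$. Your route buys something concrete: you build the hypothesis that $K$ is $2$-Baer into the auxiliary observation, and that hypothesis is exactly what makes the conclusion true. The paper's two inclusions, stated without any Baer-type hypothesis on a factor, are false in that generality: if $H\neq 1$ is a $2$-Baer $p$-group and $K$ is a $q$-group containing $k$ with $\langle k\rangle\ntriangleleft_2 K$, then for $1\neq h\in H$ your equivalence gives $\langle hk\rangle\ntriangleleft_2 G$, so $h\in T_2(G)$ while $h\notin T_2(H)\times K=K$; symmetrically, $T_2(G)\leq H\times T_2(K)$ fails whenever $H\in\T$ and $K\neq 1$ is $2$-Baer with coprime torsion, which is precisely the situation of the corollary (indeed your identity $T_2(G)=T_2(H)\times K$ shows $T_2(G)$ contains all of $K$ there). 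The paper's computation does go through in both of its applications, because there the complement $K$ is $2$-Baer (by Theorem~\ref{main} in the forward direction, by hypothesis in the converse), so this is a matter of the intermediate claim being stated more generally than its justification supports; your formulation is the one whose statement matches its proof. Your extra identity $T_2(G)=T_2(H)\times K$ when $T_2(H)\neq 1$ is correct but not needed for the corollary.
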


\begin{proof}
Assume $G=H\times K$, where $H$ is a $p$-group and $K$ is a torsion group without elements of order $p$. Let $x=hk$, with $h\in H$ and $k\in K$, such that $\l x\r\ntriangleleft \l x^G\r$. Then $\langle h\rangle\ntriangleleft \l h^H\r$, because otherwise $x^{(x^{h'k'})}\in \langle h,k\rangle=\langle x\rangle$ for all $h'\in H$ and $k'\in K$. So $h\in T_2(H)$ and $x\in T_2(H)K$. This proves that $T_2(G)\leq T_2(H)\times K$. Similarly we have $T_2(G)\leq H\times T_2(K)$. Thus, if $G$ is a generalized $T_2$-group, the claim follows from Theorems \ref{nontorsion} and \ref{main}. The converse is trivial.
\end{proof}

Let $c$ and $d$ be positive integers. A group $G$ is said to be a {\em group of type $(d,c)$} if every $d$-generator subgroup of $G$ is nilpotent of class at most $c$. In this context we are interested in groups of type $(d,3d-3)$ and of type $(d,cd)$, that were investigated by Endimioni in \cite{En91,En93} and \cite{En89}, respectively.

\begin{thm}\label{End}
Let $G$ be a locally finite $p$-group and assume that $G$ is a generalized $2$-Baer group.
\begin{itemize}
\item[$(i)$] If $p\neq 5$, then $G$ is soluble.
\item[$(ii)$] If $p>5$, then $G$ is nilpotent of bounded class.
\item [$(iii)$] If $p=5$ and $G$ is soluble, then $G$ is nilpotent of class bounded by a function depending on the derived length.
\end{itemize}
\end{thm}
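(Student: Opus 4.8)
The plan is to obtain all three conclusions as direct applications of Endimioni's structure theorems, with Proposition \ref{gen} doing the only work specific to generalized $2$-Baer groups. Indeed, Proposition \ref{gen} asserts that $G$ is of type $(d, 2(d+1))$ for every $d \geq 1$: every $d$-generator subgroup of $G$ is nilpotent of class at most $2d+2$. Since $G$ is a locally finite $p$-group, this is precisely the kind of linear bound on the class of finitely generated subgroups to which the results of \cite{En89,En91,En93} apply, so the argument reduces to matching the bound $2(d+1)$ against the thresholds in those papers.

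For parts $(i)$ and $(ii)$ I would compare $2(d+1)$ with the bound $3d-3$ used in \cite{En91,En93}. One has $2d+2 \leq 3d-3$ exactly when $d \geq 5$, so $G$ is of type $(d, 3d-3)$ for all $d \geq 5$ and therefore belongs to the class covered by Endimioni's criteria. Invoking the relevant theorem then gives solubility of $G$ when $p \neq 5$, which is $(i)$, and nilpotency of bounded class when $p > 5$, which is $(ii)$. For part $(iii)$, where $p = 5$ and $G$ is already assumed soluble, I would instead appeal to \cite{En89} on groups of type $(d, cd)$: as $2d+2 \leq cd$ holds for every $d \geq 1$ once $c \geq 4$, the group $G$ is of type $(d, cd)$ with $c = 4$, and Endimioni's theorem yields that a soluble locally finite $5$-group of this type is nilpotent of class bounded by a function of its derived length.

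The hard part is not a computation but the bookkeeping of hypotheses: I must make sure that the bound $2(d+1)$ from Proposition \ref{gen} genuinely meets the assumptions of each cited result. The only delicate point is that for the small values $d \in \{1,2,3,4\}$ the quantity $2d+2$ exceeds $3d-3$, so the containment in the class ``type $(d,3d-3)$'' is established only for $d \geq 5$. I expect this to be immaterial, since the solubility and nilpotency conclusions are driven by the asymptotic linear growth of the class with the number of generators, which is already governed by the inequality for all large $d$; nevertheless, before applying each theorem I would check that it is stated for the appropriate range of $d$ (or that the finitely many small-$d$ instances can be absorbed), so that the reduction via Proposition \ref{gen} is rigorous. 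Once this is confirmed, no further group-theoretic input is needed and all three parts follow.
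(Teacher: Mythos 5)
Your overall strategy is the paper's own: feed the bound from Proposition \ref{gen} into Endimioni's theorems, and your numerical observation that $2(d+1)\leq 3d-3$ precisely when $d\geq 5$ (so that $G$ is of type $(5,12)$, i.e. of type $(d,3d-3)$ with $d=5$) is exactly how the paper gains access to \cite{En91}. Parts $(ii)$ and $(iii)$ of your plan coincide with the paper's proof: the main theorem of \cite{En91} for $p>5$, and \cite[Corollary 2]{En89} for soluble groups of type $(d,cd)$ (the paper takes $c=3$, $d=2$, i.e. type $(2,6)$, rather than your $c=4$; that difference is immaterial).

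The genuine gap is in part $(i)$, at the prime $p=2$. The solubility result you want from \cite{En91} is Proposition 3(ii), and it applies only for $p\neq 2,5$; there is no ``relevant theorem'' for $2$-groups of type $(d,3d-3)$ that can be invoked in the uniform way you describe. The paper handles $p=2$ by a different ingredient: Proposition \ref{gen} with $d=2$ shows that $G$ is of type $(2,6)$, hence $6$-Engel, and then one quotes the solubility of locally finite $2$-groups satisfying an Engel condition, due to Abdollahi and Traustason \cite[p. 2830]{AT} (see also \cite{En93}). Your proposal never derives the Engel condition at all, and your hedge about ``checking the appropriate range of $d$'' does not cover this point: the obstruction is the prime, not the number of generators, and it cannot be absorbed by taking $d$ large. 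As written, your proof of $(i)$ is therefore incomplete for $p=2$; it becomes complete once you add the $d=2$ observation that $G$ is $6$-Engel together with the citation to \cite{AT} or \cite{En93}.
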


\begin{proof}
By Proposition \ref{gen}, the group $G$ is of type $(d,3d-3)$ with $d=5$, and of type $(d,3d)$ with $d=2$. In particular $G$ is $6$-Engel. If $p\neq 2,5$ then $G$ is soluble of bounded derived length, by \cite[Proposition 3(ii)]{En91}. On the other hand, if $p=2$, then $G$ is again soluble, as shown in \cite[p. 2830]{AT} (see also \cite{En93}). However, when $p>5$, the main theorem of \cite{En91} guarantees that $G$ is nilpotent of bounded nilpotency class. Finally, if $p=5$ and $G$ is soluble, we get $(iii)$ by \cite[Corollary 2]{En89}.
\end{proof}

\section{Metabelian generalized $T_2$-groups}

As Theorem \ref{End} of the last section suggests, there might be a universal bound for the nilpotency class of $p$-groups in $\T$ provided $p>5$. But so far we have not been able to find an explicit bound for their nilpotency class. However in the case of metabelian groups we have been able to show that $p$-groups in $\T$ have nilpotency class 3, provided that $p>2$. In the last section we will provide an example of a $2$-group in $\T$ of nilpotency class 4, showing that our restriction to odd primes is necessary.

Let $G$ be a metabelian group and denote by $G'$ its commutator subgroup. For integers $m,n$ with $n>0$ and for all $x,y,z\in G, c\in G'$, in the sequel we will use freely the identities
$$[c,x,y]=[c,y,x],$$
$$[xy,_n z]=[x,_n z][x,_n z,y][y,_n z]$$
and, if $G$ is nilpotent of class 3,
$$[x^m,y,z]=[x,y^m,z]=[x,y,z^m]=[x,y,z]^m.$$

Now we are ready to state and prove the main result of this section.

\begin{thm}\label{metp}
Let $G$ be a metabelian $p$-group with $p$ an odd prime. If $G\in \T$, then $G$ is nilpotent of class $3$.
\end{thm}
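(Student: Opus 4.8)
\section*{Proof proposal}

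The plan is to establish the two inequalities separately. For the lower bound, if $G$ had nilpotency class at most $2$ then $[G',G]=1$, so $G$ would be $2$-Engel; by Levi's theorem the normal closure of every element would be abelian, every cyclic subgroup would be $2$-subnormal, and hence $T_2(G)=1$. This contradicts $G\in\T$, so the class is at least $3$ and all the work lies in the upper bound, i.e.\ in showing $[G',G,G]=1$. Throughout I would use two facts: first, that $X:=G\backslash T_2(G)$ generates $G$, since $G$ cannot be the union of the two proper subgroups $\langle X\rangle$ and $T_2(G)$; and second, that by Lemma \ref{closure} every $x\in X$ has $\langle x^G\rangle$ of class at most $2$ and is a left $3$-Engel element.

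The first step is to record the local identities available at each $x\in X$. From $\langle x^G\rangle$ being of class $\le 2$ together with the left $3$-Engel condition one obtains, for all $g\in G$, the weight-$4$ relations $[g,x,x,x]=1$ and $[x,g,x,x]=1$. Viewing the abelian group $G'$ as a module over $\mathbb{Z}[G/G']$ (so that $[c,a]$ becomes $c(\bar a-1)$ for $c\in G'$), these say that $[x,g]$ is annihilated by $(\bar x-1)^2$. Since $[G',G,G]$ is generated, as a module, by the $[u,v,a,b]$ with $u,v,a,b\in X$, it suffices to prove $[u,v,a,b]=1$ for such elements; and it is enough to do this inside each finitely generated --- hence, by Proposition \ref{gen}, nilpotent --- subgroup $\langle S\rangle$ with $S\subseteq X$ finite, after which one concludes by downward induction on commutator weight. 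The local identities already dispose of the diagonal cases $[u,v,u,u]=[u,v,v,v]=1$.

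The crux is the identity $[x,y,x,y]=1$ for $x,y\in X$. Here I would use that $T_2(G)$ is a proper subgroup, so that at least one of $xy$ and $x^2y$ again lies in $X$; call it $z$. Working inside $\langle x,y\rangle$, which is nilpotent by Proposition \ref{gen} and in which the local identities force $\gamma_5=1$ and make $\gamma_4$ cyclic, generated by $[x,y,x,y]$, I would expand the left $3$-Engel relation $[y,z,z,z]=1$ by means of the metabelian product formula $[uv,_{n}w]=[u,_{n}w][u,_{n}w,v][v,_{n}w]$ and the symmetry $[c,a,b]=[c,b,a]$. Using the diagonal identities above, every term cancels except an even multiple $2^{k}[x,y,x,y]$, with $k=1$ when $z=xy$. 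Because $G$ is a $p$-group with $p$ odd, multiplication by $2$ is an automorphism of $G'$, and therefore $[x,y,x,y]=1$.

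Finally, the general commutator $[u,v,a,b]$ with four distinct entries in $X$ is reached by polarizing the identities $[u,g,u,u]=1$ and $[x,y,x,y]=1$ in each repeated slot: one replaces a generator by a product of generators and extracts the multilinear part, always keeping the auxiliary products inside $X$ by the device that one of $g,xg,x^2g$ avoids $T_2(G)$. Each polarization introduces an integer power-of-$2$ coefficient in front of the target commutator, which the oddness of $p$ removes, so that all these commutators vanish and $[G',G,G]=1$. I expect the main obstacle to be exactly this bookkeeping: arranging the expansions so that only an even multiple of the desired weight-$4$ commutator survives, and controlling which products of elements of $X$ remain outside $T_2(G)$. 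The hypothesis $p>2$ is essential precisely because the argument repeatedly divides by $2$; Example \ref{2-group} shows that for $p=2$ the class can indeed be $4$.
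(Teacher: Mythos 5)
Your lower bound (class $\geq 3$ via Levi's theorem) is fine, and your crux computation is correct: for $x,y\in G\backslash T_2(G)$, the diagonal identities coming from Lemma \ref{closure} do force $\gamma_5(\langle x,y\rangle)=1$ with $\gamma_4(\langle x,y\rangle)=\langle [x,y,x,y]\rangle$, and expanding $[y,z,z,z]=1$ for $z=xy$ (resp. $z=x^2y$) yields $[x,y,x,y]^2=1$ (resp. $[x,y,x,y]^8=1$), so oddness of $p$ gives $[x,y,x,y]=1$. The genuine gap is your final paragraph. The governing principle you state there --- ``each polarization introduces an integer power-of-2 coefficient in front of the target commutator'' --- is not what actually happens. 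Polarizing the identity $[z,v,z,v]=1$ with $z=u^{\epsilon}a\in G\backslash T_2(G)$ gives, modulo $\gamma_5$, the \emph{coupled} relation $[u,v,a,v]\,[a,v,u,v]\equiv 1$: a relation between two different mixed commutators, not an even multiple of one. Working out the consequences, one finds that the relations obtained from single polarizations of your two identities (together with Jacobi) leave a free parameter in each multidegree, e.g. they only force $[u,a,v,a]=[v,u,a,u]=-[u,v,a,v]$ without killing these elements; to get vanishing one must additionally invoke relations coming from \emph{pairs of compound elements}, such as $[uv,ua,\overline{uv},\overline{ua}]$-type expansions, and redo everything in the fallback cases where $uv\in T_2(G)$ forces the substitute $u^2v$ (note you cannot assume both $uv$ and $u^2v$ lie outside $T_2(G)$, so the coefficient patterns change). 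None of this bookkeeping is done, and you yourself flag it as ``the main obstacle'' --- but it is precisely where the entire difficulty of the theorem sits, so as written this is a plan rather than a proof.

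It is worth contrasting with how the paper avoids this multilinear analysis altogether: it proves the full $3$-Engel law $[x,_3y]=[y,_3x]=1$ for \emph{all} pairs, in three cases according to membership in $T_2(G)$. The case you never touch --- both $x,y\in T_2(G)$ --- is handled by taking $z\in G\backslash T_2(G)$, expanding $1=[xz,_3y]=[x,_3y][x,_3y,z]$, and then commuting twice more with $z$ (using that $z$ is left $3$-Engel) to strip off the error terms and conclude $[x,_3y]=1$. Having the $3$-Engel law for all pairs, the paper then quotes Robinson's Theorem 7.36$(i)$ to pass to class $\leq 3$. Your route, which aims directly at $\gamma_4(G)=1$ from identities at elements of $G\backslash T_2(G)$ only, is legitimate in principle (since $G\backslash T_2(G)$ generates $G$, the reduction to commutators with entries there is sound), and if completed would be more self-contained; but completing it requires assembling and solving the full system of polarized relations described above, which is missing.
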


\begin{proof}
By Theorem 7.36 $(i)$ in \cite{Ro2}, it is enough to show that $G$ is a 3-Engel group. For this it suffices to show that for every pair of elements $x,y$ in $G$ we have $[x,_3 y]=1$ and $[y,_3 x]=1$. We consider three cases: $x$ and $y$ are both in $G\backslash T_2(G)$, only one of the elements is in $G\backslash T_2(G)$, and $x$ and $y$ are both in $T_2(G)$. Set $H=\l x,y\r$.

In the first case, we observe by Lemma \ref{closure} that $[y,_3 x]=1$ and $[x,_3 y]=1$. Now consider the second case, where we can assume without loss of generality that $x\in G\backslash T_2(G)$ and $y\in T_2(G)$. We observe that also $xy\in G\backslash T_2(G)$ and hence $\l x^G\r\l (xy)^G\r$ is a group of nilpotency class 4 at most and $H\leq \l x^G\r\l (xy)^G\r$. By Lemma \ref{closure} we have $[y,_3 x]=1$ and $[x,_3 xy]=1$. To prove our claim, we need to show $[x,_3 y]=1$. Since $H$ is of class 4 at most, we obtain by linear expansion that
\begin{equation}
1=[x,_3 xy]=[x,_3 y][x,y,y,x][x,y,x,y]. \tag{3.1.1}
\end{equation}
Substituting $x^{-1}$ for $x$ into (3.1.1) and again expanding linearly yields
\begin{equation}\label{x-1}
1=[x^{-1},_3 x^{-1}y]=[x,_3 y]^{-1}[x,y,y,x][x,y,x,y]. \tag{3.1.2}
\end{equation}
Multiplying (3.1.1) by the inverse of (3.1.2) leads to $[x,_3 y]^2=1$. Since $G$ has no involutions, we conclude $[x,_3 y]=1$, the desired result.

Before we prove the third case, i.e. $x$ and $y$ are both in $T_2(G)$, we want to show that $H$ is nilpotent of class 3 at most, provided that not both $x$ and $y$ are in $T_2(G)$. Now (3.1.1) together with $[x,_3 y]=1$ yields that
$$1=[x,y,y,x][x,y,x,y].$$
By III, 2.12 (b) in \cite{Hu}, we have that any 2-generator group of class 4 is metabelian and thus $[x,y,y,x]=[x,y,x,y]$. This together with the fact that our group has no involutions yields $1=[x,y,y,x]$. Now we observe that
$$\gamma_4(H)=\l[y,_3 x], [x,_3 y], [x,y,y,x]\r.$$
But we showed that each of the generators is the identity in $H$. We conclude $\gamma_4(H)=1$ and $H$ is of class 3.

To prove the remaining case, i.e. $x,y\in T_2(G)$ implies $[x,_3 y]=[y,_3 x]=1$, we recall that $G$ is metabelian. Let $z\in G\backslash T_2(G)$ and $x,y\in T_2(G)$. Obviously, $xz\in G\backslash T_2(G)$. Then the subgroups $\l y,z\r$ and $\l y,xz\r$ are nilpotent of class $3$, by the previous argument. It follows that
$$1=[xz,_3 y]=[x,_3 y][x,_3 y, z][z,_3 y]=[x,_3 y][x,_3 y, z].$$
By commuting with $z$, we get $1=[x,_3 y,z][x,_3 y,z,z]$. Commuting again with $z$ yields $1=[x,_3 y,z,z]$. Thus $1=[x,_3 y,z]$ and consequently $1=[x,_3 y]$. Similarly we obtain $[y,_3 x]=1$. We conclude that $G$ is 3-Engel.
\end{proof}

\section{Examples and Counterexamples}

In this section we provide various examples and counterexamples in support of claims made earlier in this paper. In Theorem \ref{metp} it was shown that metabelian $p$-groups in $\T$, $p$ an odd prime, have nilpotency class exactly 3. Our first example shows that this result cannot be extended to $2$-groups. Our claims have been verified by {\sf GAP} \cite{GAP}.

\begin{exa}\label{2-group}
Let
$$G=\l x,y\,|\,x^{16}=y^{16}=1, (xy^{-1})^2=[x,y]^4=1, [x,y,x]=x^4, [x,y,y]=y^4\r.$$
Then $G$ is a metabelian group of order $2^7$ and nilpotency class $4$. Moreover $|T_2(G)|=2^6$, thus $G$ is a generalized $T_2$-group.
\end{exa}

For our next example we need the following expansion formula for metabelian groups which can be found in \cite{HK}.

\begin{lem}
Let $G$ be a metabelian group, $x\in G$ and $n$ an integer. Then
\begin{equation}
(xy^{-1})^n=x^n (\prod_{0<i+j<n}[x,_i y,_j x]^{\binom{n}{i+j+1}})y^{-n}. \tag{4.2.1}
\end{equation}
\end{lem}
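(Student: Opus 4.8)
The plan is to argue by induction on $n$. For $n=0$ (or $n=1$) the product over $0<i+j<n$ is empty and the identity reduces to $x^{n}y^{-n}$, which is correct. Assuming (4.2.1) for $n$, I would multiply on the right by $xy^{-1}$, obtaining
$$(xy^{-1})^{n+1}=x^{n}P_n\,y^{-n}\,x\,y^{-1},\qquad P_n=\prod_{0<i+j<n}[x,_i y,_j x]^{\binom{n}{i+j+1}},$$
and then re-collect the right-hand side into the canonical form $x^{n+1}P_{n+1}\,y^{-(n+1)}$.

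The collection is carried out entirely within the metabelian commutator calculus recalled before Theorem \ref{metp}. Since $G'$ is abelian, every commutator of weight at least $2$ lies in $G'$, and hence the map $c\mapsto[c,x]$ is an endomorphism of the abelian group $G'$; together with $[c,x,y]=[c,y,x]$ for $c\in G'$ this lets me sort the trailing entries of any commutator into the normal form $[x,_i y,_j x]$ (a factor with $i=0$ and $j\ge1$ being trivial, as $[x,x]=1$, which keeps the index set equal to $\{(i,j):0<i+j<n+1\}$). The two places where new commutators are created are the passage of the new generator $x$ across the inductive product, $P_n x=x\,P_n[P_n,x]$ with $[P_n,x]=\prod_{0<i+j<n}[x,_i y,_{j+1}x]^{\binom{n}{i+j+1}}$, and the passage of the accumulated power $y^{-n}$ across $x$, namely $y^{-n}x=x\,y^{-n}[y^{-n},x]$; the factor $[y^{-n},x]$ is expanded by the standard metabelian power identity and rewritten in the form $[x,_i y,_j x]$ by repeated use of the inversion rule $[a^{-1},b]=\bigl([a,b]^{-1}\bigr)^{a^{-1}}$, whose prototype $[y^{-1},x]^{y}=[x,y]$ is checked directly. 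Both mechanisms only raise weights, so no commutator of weight exceeding $n+1$ appears and the product remains finite.

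Collecting equal commutators, each $[x,_i y,_j x]$ acquires a same-weight contribution $\binom{n}{i+j+1}$, from carrying $P_n$ through unchanged, and a weight-raised contribution $\binom{n}{i+j}$, produced when the neighbouring factor is commuted across the newly introduced generator; Pascal's identity $\binom{n}{i+j+1}+\binom{n}{i+j}=\binom{n+1}{i+j+1}$ then yields exactly the exponent of $P_{n+1}$. The main obstacle is precisely this coefficient bookkeeping: one must check, index by index and with the correct signs, that the weight-raised multiplicity is $\binom{n}{i+j}$ and not something shifted, tracking the negative powers $y^{-n}$ and the nested conjugations through the inversion rule without double counting. What makes the accounting feasible is that the calculus preserves weight, so Pascal's identity may be applied one weight at a time. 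Alternatively, one could verify the identity in the free metabelian group of rank $2$, whose lower central factors are explicit free abelian groups, and then specialize; but the inductive collection is the elementary route, and it is in this form that the formula is recorded in \cite{HK}.
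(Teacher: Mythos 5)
The paper does not actually prove this lemma: formula (4.2.1) is simply quoted from Hogan and Kappe \cite{HK}, so your argument is supplying a proof the paper omits, and it cannot coincide with ``the paper's approach.'' That said, your inductive scheme is the standard one and it is correct in outline: writing $(xy^{-1})^{n+1}=x^nP_n\,y^{-n}xy^{-1}$, each normal-form commutator $[x,_i y,_j x]$ in $P_{n+1}$ does receive exactly the carried exponent $\binom{n}{i+j+1}$ from $P_n$ and a weight-raised exponent $\binom{n}{i+j}$ --- from $[P_n,x]$ when $j\geq 1$, and from pushing the power of $y$ across the new $x$ when $j=0$ --- so Pascal's identity closes the induction as you claim. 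Two improvements would turn your sketch into a clean proof. First, replace $y^{-n}x=xy^{-n}[y^{-n},x]$, which parks the commutator on the wrong side of $y^{-n}$ and forces you into the inversion rule, nested conjugations and sign-tracking you admit you have not carried out, by the identity $y^{-n}x=x[x,y^{n}]\,y^{-n}$ (immediate on expanding both sides), combined with the standard metabelian expansion $[x,y^{n}]=\prod_{i\geq 1}[x,_i y]^{\binom{n}{i}}$; this produces the $j=0$ contributions $\binom{n}{i}$ directly, involves no negative exponents at all, and reduces the ``index by index'' check you defer to a two-line computation. Second, note that the induction (and the formula itself) only makes sense for $n\geq 0$, since the index range $0<i+j<n$ is empty for negative $n$; the word ``integer'' in the statement should be read as ``non-negative integer,'' which is all the paper uses in Example \ref{ex}. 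What your route buys, compared with the paper's bare citation, is a self-contained verification by elementary commutator collection; what the citation buys is economy, since \cite{HK} contains precisely this collection argument.
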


The next example provides a $p$-group $G$ with $1\neq T_2(G)\neq G$ for any prime $p$.

\begin{exa}\label{ex}
For any prime $p$, let
$$G=\langle x,y\,|\,x^{p^3}=y^{p^3}=[x,y]^p=[x,y,x]=1, [x,y,y]=x^{p^2}=y^{p^2}\rangle.$$
Then $G$ is a generalized $T_2$-group of nilpotency class $3$ and of order $p^6$ with $|T_2(G)|=p^5$.
\end{exa}

\begin{proof}
We divide the proof into three steps. In the first one we will show that the group $G$ is nilpotent of class $3$ and of order $p^6$.

Let $N=\l [x,y], x^{p^2}\r$. We have $N\lhd G$ and $G/N$ abelian, so that $G'\leq N$. It follows that $G'=N\leq Z_2(G)$ and $G$ is nilpotent of class $3$ (see also \cite[Proposition 3.1]{ABT}). In particular, $G$ is finite and, by the Burnside Basis Theorem \cite[5.3.2]{Ro},
$$Frat(G)=G'G^p=\l x^p,y^p,[x,y]\r.$$
Thus every $g\in G$ can be written as $g=x^m y^n z$, where $0\leq m,n\leq p-1$ and $z\in Frat(G)$. Since $|Frat(G)|=p^4$, we deduce that $|G|=p^6$.

In our next step we will provide a necessary and sufficient condition for $g\in G$, such that $\l g\r$ is 2-subnormal in $G$, as follows: Let $g=x^m y^n z\in G$ with $0\leq m,n\leq p-1$ and $z\in Frat(G)$. Then
$\langle g\rangle\lhd_2 G$ if and only if $m\equiv n\equiv 0$ $(mod\,p)$ or $m+n\not\equiv 0$ $(mod\,p)$.

This can be seen as follows. Clearly, we have $Frat(G)\leq Z_2(G)$ and $exp(G')=p$. Then
$$[x,g,g]=[x,y^n,y^n]=x^{n^2 p^2} \quad{\rm and} \quad [y,g,g]=[y,x^m,y^n]=x^{-mn p^2}.$$
Moreover, applying (4.2.1), we get
$$g^{p^2}=x^{m p^2} y^{n p^2}=x^{(m+n)p^2}.$$
Hence $[x,g,g]\in\langle g\rangle$ and $[y,g,g]\in\langle g\rangle$
if and only if
$$x^{n^2 p^2}\in\langle x^{(m+n)p^2}\rangle\quad{\rm and}\quad
x^{-mn p^2}\in\langle x^{(m+n)p^2}\rangle.$$
This is equivalent to the existence of integers $\alpha$ and $\beta$ satisfying the following congruences:
\begin{equation}
\hspace{0.55cm}n^2 \equiv \alpha(m+n)\;(mod\,p); \tag{4.3.1}
\end{equation}
\begin{equation}
-mn \equiv \beta(m+n)\;(mod\,p). \tag{4.3.2}
\end{equation}

Now let $\langle g\rangle\lhd_2 G$. This gives $[x,g,g]\in\langle g\rangle$ and $[y,g,g]\in\langle g\rangle$. Therefore (4.3.1) and (4.3.2) are satisfied for some $\alpha$ and $\beta$. Assuming $m+n\equiv 0$ $(mod\,p)$, it follows that $m\equiv n\equiv 0$ $(mod\,p)$. Hence, either $m\equiv n\equiv 0$ $(mod\,p)$ or $m+n\not\equiv 0$ $(mod\,p)$.

Conversely, let $h=x^{m'} y^{n'} z'$ be an arbitrary element of $G$ where $0\leq m',n'\leq p-1$ and $z'\in Frat(G)$. It is easy to see that
$$[h,g,g]=[x^{m'}y^{n'},g,g]=[x^{m'},g,g][y^{n'},g,g]=[x,g,g]^{m'} [y,g,g]^{n'}.$$
The claim will follow once we have shown that $[h,g,g]\in \l g\r$. If $m\equiv n\equiv 0$ $(mod\,p)$, then $g\in Frat\,(G)$ and $[h,g,g]=1$. Suppose $m+n\not\equiv 0$ $(mod\,p)$.
In this case there always exist integers $\alpha$ and $\beta$ satisfying (4.3.1) and (4.3.2). Thus, by the above, $[x,g,g],[y,g,g]\in\langle g\rangle$ and so is $[h,g,g]$.

Next we will show that the group $G$ is a generalized $T_2$-group by establishing that $|T_2(G)|=p^5$. Let $g=x^m y^n z\in G$. Then, by the above, we have $\langle g\rangle\ntriangleleft_2 G$ if and only if $m+n\equiv 0\;(mod\,p)$ and $mn\not\equiv 0\;(mod\,p)$.
It follows
$$T_2(G)=\langle g\,|\,g=x^m y^{p-m} z, 0<m\leq p-1, z\in Frat\,(G)\rangle.$$
On the other hand, by (4.2.1), we have $x^m y^{p-m} z=(xy^{p-1})^m z'$ for a suitable $z'\in Frat\,(G)$. It follows that
$$T_2(G)=\langle xy^{p-1},x^p,y^p,[x,y]\rangle.$$
Notice also that $xy^{p-1}\notin Frat(G)$, otherwise $1=[xy^{p-1},y,y]$ and consequently $1=[x,y,y]$.
However, again by (4.2.1), it follows
$$(xy^{p-1})^p=x^p[x,y^{1-p}]^{\binom{p}{2}}[x,y^{1-p},y^{1-p}]^{\binom{p}{3}}y^{(p-1)p}\in Frat\,(G)$$
and
$$(xy^{p-1})^{p^2}=x^{p^2}y^{(p-1)p^2}=x^{p^2}x^{(p-1)p^2}=x^{p^3}=1.$$
Since $|Frat(G)|=p^4$, we conclude that $|T_2(G)|=p^{5}$. This proves that $G\in\T$.
\end{proof}

\end{document}